\begin{document}

\newcommand{\C}{\mathbb C}
\newcommand{\N}{\mathbb N}
\newcommand{\R}{\mathbb R}

\renewcommand{\Im}{{\mathrm{Im}}}
\renewcommand{\Re}{{\mathrm{Re}}}

\def\qqs{\ \forall \ }
\def \sprd#1,#2,#3{{\left( #1,#2\right)}_{#3}}
\newcommand{\lmd}{{\mathcal L}}

\newtheorem{theo}{Theorem}[section]
\newtheorem{prop}[theo]{Proposition}
\newtheorem{lem}[theo]{Lemma}
\newtheorem{cor}[theo]{Corollary}
\theoremstyle{definition}
\newtheorem{defi}[theo]{Definition}
\theoremstyle{remark}
\newtheorem{rem}[theo]{Remark}

\renewcommand{\cdots}{\dots}

\renewcommand{\theenumi}{\roman{enumi}}
\renewcommand{\labelenumi}{\theenumi)}

\title{The Influence of the Tunnel Effect on $L^{\infty}$-time decay}

%----------Author 1
\author{F. Ali Mehmeti}

\address{%
Univ Lille Nord de France, F-59000 Lille, France \newline
\indent
 UVHC, LAMAV, FR CNRS 2956, F-59313 Valenciennes, France}

\email{felix.ali-mehmeti@univ-valenciennes.fr}

%----------Author 2
\author{R. Haller-Dintelmann}

\address{%
TU Darmstadt \\
Fachbereich Mathematik \\
Schlo{\ss}gartenstra{\ss}e 7\\
64289 Darmstadt\\
Germany}

\email{haller@mathematik.tu-darmstadt.de}

%----------Author 3
\author{V. R\'egnier}

\address{%
Univ Lille Nord de France, F-59000 Lille, France \newline
\indent
 UVHC, LAMAV, FR CNRS 2956, F-59313 Valenciennes, France}

\email{Virginie.Regnier@univ-valenciennes.fr}

\begin{abstract}
We consider the Klein-Gordon equation on a star-shaped network
composed of $n$ half-axes connected at their origins. We add a
potential which is constant but different on each branch. Exploiting
a spectral theoretic solution formula from a previous paper, we
study  the $L^{\infty}$-time decay via H\"ormander's version of the
stationary phase method. We analyze the coefficient $c$ of the
leading term $c\cdot t^{-1/2}$ of the asymptotic expansion of the
solution with respect to time. For two branches we prove that for an
initial condition in an energy band above the threshold of tunnel
effect, this coefficient tends to zero on the branch with the higher
potential, as the potential difference tends to infinity. At the
same time the incline to the $t$-axis and the aperture of the cone
of $t^{-1/2}$-decay in the $(t,x)$-plane tend to zero.
\end{abstract}

\subjclass[2000]{Primary 34B45; Secondary 47A70, 35B40\rm}

\keywords{Networks, Klein-Gordon equation, stationary phase method,
$L^{\infty}$-time decay\rm}

\thanks{Parts of this work were done, while the second author visited
the University of Valenciennes. He wishes to express his gratitude to
F. Ali Mehmeti and the LAMAV for their hospitality}

\maketitle

%
%
%%%%%%%%%%%%%%%%%%%%%%%%%%%%%%%%%%%%%%%%%%%%%%%%%%%%%%%%%%%%%%%%%%%%%%%%%%%%
%%%%%%%%%%%%%%%%%%%%%%%%%%%%%%%%%%%%%%%%%%%%%%%%%%%%%%%%%%%%%%%%%%%%%%%%%%%%
%
%
\section{Introduction}
%
%
%%%%%%%%%%%%%%%%%%%%%%%%%%%%%%%%%%%%%%%%%%%%%%%%%%%%%%%%%%%%%%%%%%%%%%%%%%%%
%%%%%%%%%%%%%%%%%%%%%%%%%%%%%%%%%%%%%%%%%%%%%%%%%%%%%%%%%%%%%%%%%%%%%%%%%%%%
%
%
\noindent In this paper we study the $L^{\infty}$-time decay of
waves in a star shaped network of one-dimensional semi-infinite
media having different dispersion properties.   Results in
experimental physics \cite{Nim, hai.nim}, theoretical physics
\cite{D-L} and functional analysis \cite{Alreg3,yas} describe
phenomena created in this situation by the dynamics of the tunnel
effect: the delayed reflection and advanced transmission near nodes
issuing two branches. Our purpose is to describe the influence of
the height of a potential step on the $L^{\infty}$-time decay of
wave packets   above the threshold of tunnel effect, which sheds a
new light on its dynamics.

In this proceedings contribution we state results for a special
choice of initial conditions. The proofs in a more general context
will be the core of another paper.

The dynamical problem can be described as follows:

\noindent
Let  $N_1, \cdots, N_n$ be $n$ disjoint copies of $(0,+ \infty)$
with $n \geq 2$. Consider numbers  $a_k, c_k$ satisfying $0 < c_k$,
for $k = 1, \dots, n$ and $0 \leq a_1 \leq a_2 \leq \ldots \leq a_n
< + \infty$. Find a vector $(u_1, \dots, u_n)$ of functions $u_k:
[0, +\infty) \times \overline{N_k} \rightarrow \C$ satisfying the
Klein-Gordon equations
\[ [\partial_t^2 - c_k \partial_x^2   + a_k ] u_k(t,x) = 0 , \ k = 1, \dots, n,
\]
on $N_1, \cdots, N_n$ coupled at zero by usual Kirchhoff conditions
and complemented with initial conditions for the functions $u_k$ and their
derivatives.

Reformulating this as an abstract Cauchy problem, one is confronted with the
self-adjoint operator $A = (-c_k \cdot \partial^2_x + a_k)_{k=1, \dots, n}$
in $\prod_{k=1}^n L^2(N_k)$, with a domain that incorporates the Kirchhoff transmission
conditions at zero. For an exact definition of $A$, we refer to
Section~\ref{sec:sol:form}.

Invoking functional calculus for this operator, the solution can be given in
terms of
\[e^{\pm i \sqrt{A}t}u_0 \hbox{ and }e^{\pm i \sqrt{A}t}v_0.
\]
In a previous paper (\cite{ahr.arxiv2}, see also \cite{ahr.taranto})
we construct explicitly a spectral represen\-tation of \\
$\prod_{k=1}^n L^2(N_k)$ with respect to $A$ involving $n$ families
of generalized eigenfunctions. The $k$-th family is defined on
$[a_k,\infty)$ which reflects that $\sigma(A) = [a_1,\infty)$ and
that the multiplicity of the spectrum is $j$ in $[a_j,a_{j+1}),
j=1,\ldots, n$, where $ a_{n+1} = + \infty$. In this band $(a_j,
a_{j+1})$ the generalized eigenfunctions exhibit exponential decay
on the branches $N_{j+1},\ldots, N_n$, a fact called "multiple
tunnel effect" in \cite{ahr.arxiv2}.

In Section~\ref{sec:sol:form} we recall the solution formula proved
in \cite{ahr.arxiv2}. In Section~\ref{decay} we use H\"ormander's
version of the stationary phase method to derive the leading term of
the asymptotic expansion of the solution on certain branches and for
initial conditions in a compact energy band included in
$(a_j,a_{j+1})$. We obtain $c\cdot t^{-1/2}$ in cones in the
$(t,x)$-space delimited by the group velocities of the limit
energies and the dependence of $c$ on the coefficients of the
operator is indicated.   One can prove that outside these cones the
$L^{\infty}$-norm decays at least as $t^{-1}$. The complete analysis
will be carried out in a more detailed paper.

For the case of two branches and wave packets having a compact
energy band included in $(a_2,\infty)$, we show in
Section~\ref{acc.decay} that $c$ tends to zero on the side of the
higher potential, if $a_1$ stays fixed and $a_2$ tends to infinity.
We observe further that the exact $t^{-1/2}$-decay takes place in a
cone in the $(t,x)$-plane whose aperture and incline to the $t-$axis
tend to zero as $a_2$ tends to infinity. Physically the model
corresponds to a relativistic particle in a one dimensional world
with a potential step of amount $a_2-a_1$ in $x=0$. Our result
represents thus a dynamical feature for phenomena close to tunnel
effect, which might be confirmed by physical experiments.

Our results are designed to serve as tools in some pertinent
applications as the study of more general networks of wave guides
(for example microwave networks \cite{poz}) and the treatment of
coupled transmission conditions \cite{cm.cbc}.

For the Klein-Gordon equation in $\R^n$ with constant coefficients
the $L^{\infty}$-time decay   $c\cdot t^{-1/2}$ has been proved in
\cite{stramawa}. Adapting their method to a spectral theoretic
solution formula for two branches, it has been shown in
\cite{fam2,fam3} that the $L^{\infty}$-norm decays  at least as
$c\cdot t^{-1/4}$.

In \cite{lie.opt} and several related articles, the author studies
the $L^{\infty}$-time decay for crystal optics using similar methods.

In \cite{kostr}, the authors consider general networks with
semi-infinite ends. They give a construction to compute some
generalized eigenfunctions but no attempt is made to construct
explicit inversion formulas. In \cite{vbl.evl} the relation of the
eigenvalues of the Laplacian in an $L^{\infty}$-setting on infinite,
locally finite networks to the adjacency operator of the network is
studied.

\vspace{5mm}

 \noindent
{\it Acknowledgements}

\medskip

The authors thank Otto {\sc Liess} for useful remarks.

%
%
%%%%%%%%%%%%%%%%%%%%%%%%%%%%%%%%%%%%%%%%%%%%%%%%%%%%%%%%%%%%%%%%%%%%%%%%%%%%
%%%%%%%%%%%%%%%%%%%%%%%%%%%%%%%%%%%%%%%%%%%%%%%%%%%%%%%%%%%%%%%%%%%%%%%%%%%%
%
%
\section{A solution formula} \label{sec:sol:form}
%
%
%%%%%%%%%%%%%%%%%%%%%%%%%%%%%%%%%%%%%%%%%%%%%%%%%%%%%%%%%%%%%%%%%%%%%%%%%%%%
%%%%%%%%%%%%%%%%%%%%%%%%%%%%%%%%%%%%%%%%%%%%%%%%%%%%%%%%%%%%%%%%%%%%%%%%%%%%
%
%
\noindent The aim of this section is to recall the tools we used in
\cite{ahr.arxiv2} as well as the solution formula of the same paper
for a special initial condition and to adapt this formula for the
use of the stationary phase method
in the next section.
\begin{defi}[Functional analytic framework]  \label{def.A} ~
\begin{enumerate}
\item  Let $n \geq 2$ and  $N_1, \cdots, N_n$ be $n$ disjoint sets
identified with $(0, +\infty)$. Put $N := \bigcup_{k=1}^n
      \overline{N_k}$, identifying the endpoints $0$.
%      Furthermore, we write
%      $[a,b]_{N_k}$ for the interval $[a,b]$ in the branch $N_k$.
\\
      For the
      notation of functions two viewpoints are useful:
      \begin{itemize}
      \item functions $f$ on the object $N$ and $f_k$ is the restriction of
        $f$ to $N_k$.
      \item $n$-tuples of functions on the branches $N_k$; then sometimes we
            write $f = (f_1, \cdots, f_n)$.
      \end{itemize}
\item Two transmission conditions are introduced:
      \begin{align*}
        \strut \text{($T_0$): } & (u_k)_{k=1,\dots,n} \in
               \prod_{k=1}^n C^(\overline{N_k}) \text{ satisfies } u_i(0) =
               u_k(0), \ i,k \in \{ 1, \cdots, n \}.
        \intertext{This condition in particular implies that $(u_k)_{k=1, \dots, n}$ may
            be viewed as a well-defined function on $N$.}
        \text{($T_1$): } & (u_k)_{k=1,\dots,n} \in \prod_{k=1}^n
               C^1(\overline{N_k}) \text{ satisfies } \sum_{k=1}^n c_k \cdot
               \partial_x u_k(0^+) = 0.
      \end{align*}
\item Define the real Hilbert space $H = \prod_{k=1}^n L^2(N_k)$ with scalar product
      \[ (u,v)_H = \sum_{k=1}^n (u_k,v_k)_{L^2(N_k)}
      \]
      and the operator $A: D(A) \longrightarrow H$ by
      \[ \begin{aligned}
            D(A) &= \Bigl\{ (u_k)_{k=1, \dots, n} \in \prod_{k=1}^n H^2(N_k) :
        (u_k)_{k=1, \dots, n} \text{ satisfies } (T_0) \text{ and }
        (T_1) \Bigr\}, \\
            A((u_k)_{k=1, \cdots, n}) &= (A_ku_k)_{k=1, \cdots, n} = (-c_k \cdot \partial^2_x
            u_k + a_k u_k
                  )_{k=1, \cdots, n}.
         \end{aligned}
      \]
\end{enumerate}
\end{defi}

Note that, if $c_k = 1$ and $a_k=0$ for every $k \in \{1, \cdots, n
\}$, $A$ is the Laplacian in the sense of the existing literature,
cf. \cite{vbl.evl, kostr}. \rm
%
%%%%%%%%%%%%%%%%%%%%%%%%%%%%%%%%%%%%%%%%%%%%%%%%%%%%%%%%%%%%%%%%%%%%%%%%%%%%%
%%%%%%%%%%%%%%%%%%%%%%%%%%%%%%%%%%%%%%%%%%%%%%%%%%%%%%%%%%%%%%%%%%%%%%%%%%%%%
%
%
\noindent
%%%%%%%%%%%%%%%%%%%%%%%%%%%%%%%%%%%%%%%%%%%%%%%%%%%%%%%%%%%%%%%%%%%%%%%%%%%%%
\begin{defi}[Fourier-type transform $V$]  \label{def.V} ~
\begin{enumerate}
\item For $k \in \{ 1, \dots, n\}$ and $\lambda \in \C$ let
\[ \xi_k(\lambda) := \sqrt{\frac{\lambda - a_k}{c_k}} \quad \text{and} \quad
    s_k := - \frac{\sum_{l \neq k} c_l \xi_l(\lambda)}{c_k \xi_k(\lambda)}.
\]
Here, and in all what follows, the complex square root is chosen in such a way
that $\sqrt{r \cdot e^{i \phi}} = \sqrt{r} e^{i \phi/2}$ with $r>0$ and $\phi
\in [-\pi,\pi)$.\\
\item For $\lambda \in \C$ and $j,k \in \{ 1, \cdots, n\}$, we
define generalized eigenfunctions $F_{\lambda}^{\pm,j}: N
\rightarrow \C$ of $A$ by $F_{\lambda}^{\pm,j}(x) :=
F_{\lambda,k}^{\pm,j}(x)$ with
\[ \left\{ \begin{aligned}
       F_{\lambda,k}^{\pm,j}(x) &= \cos(\xi_j( \lambda) x ) \pm i
       s_j( \lambda)   \sin(\xi_j( \lambda) x ), &  \text{for } k = j,\\
      F_{\lambda,k}^{\pm,j}(x) &= \exp(\pm i \xi_k(\lambda)x), &
          \text{for } k \neq j.
   \end{aligned} \right.
\]
for $x \in \overline{N_k}.$
\item For $l = 1, \dots, n$ let
\[ q_l(\lambda) := \begin{cases}
            0, & \text{if } \lambda < a_l, \\
                    \frac{c_l \xi_l(\lambda)}{|\sum_{j=1}^n c_j \xi_j(\lambda)|^2}, & \text{if } a_l <
            \lambda.
                   \end{cases}
\]
\item Considering for every $k = 1, \dots, n$ the weighted space $L^2((a_k, +\infty),
q_k)$, we set $L^2_q := \prod_{k=1}^n L^2((a_k, + \infty),q_k)$. The
corresponding scalar product is
\[ (F, G)_q := \sum_{k=1}^n \int_{(a_k, +\infty)} q_k(\lambda) F_k(\lambda)
    \overline{G_k(\lambda)} \; d \lambda
\]
and its associated norm $|F|_q := (F, F)_q^{1/2}$.
\item For all $f \in L^1(N, \C)$ we define $Vf: \Pi_{k=1}^n [a_k , + \infty) \to \C$ by
\[ (Vf)_k(\lambda):= \int_N f(x) \overline{(F_{\lambda}^{-,k})}(x) \; dx, \ k = 1, \ldots, n.
\]
\end{enumerate}
\end{defi}
%%%%%%%%%%%%%%%%%%%%%%%%%%%%%%%%%%%%%%%%%%%%%%%%%%%%%%%%%%%%%%%%%%%%%%%%%%%%
%
In \cite{ahr.arxiv2}, we show that $V$ diagonalizes $A$ and we
determine a metric setting in which it is an isometry. Let us recall
these useful properties of $V$ as well as the fact that the
property $u \in D(A^j)$ can be characterized in terms of the decay
rate of the components of $Vu$.
%%%%%%%%%%%%%%%%%%%%%%%%%%%%%%%%%%%%%%%%%%%%%%%%%%%%%%%%%%%%%%%%%%%%%%%%%%%%%%
\begin{theo} \label{V.iso}
Endow $\prod_{k=1}^n C_c^\infty(N_k)$ with the norm of $H =
\prod_{k=1}^n L^2(N_k)$. Then
\begin{enumerate}
\item  $V : \prod_{k=1}^n C_c^\infty(N_k) \to L^2_q$ is isometric and can
be extended to an isometry $\tilde{V} : H \to L^2_q$, which we shall
again denote by $V$ in the following.
\item $V : H \to L^2_q$ is a spectral representation of $H$ with respect to $A$.
In particular, $V$ is surjective.
\item The spectrum of the operator $A$ is $\sigma(A) = [a_1, + \infty)$.
\item For $l \in \N$ the following statements are equivalent:
    \begin{enumerate}
    \item $u \in D(A^l)$,
    \item $\lambda \mapsto \lambda^{l} (Vu)(\lambda) \in L_q^2$,
    \item $\lambda \mapsto \lambda^{l} (Vu)_k(\lambda) \in
    L^2((a_k, + \infty),q_k), \ k=1,\ldots,n$.
    \end{enumerate}
\end{enumerate}
\end{theo}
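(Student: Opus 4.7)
The plan is to establish (i)--(iii) by constructing the spectral resolution of $A$ via Stone's formula applied to the explicit resolvent of $A$, and to deduce (iv) from the resulting unitary equivalence to a multiplication operator.

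First I would compute $R(\lambda) = (A - \lambda)^{-1}$ for $\lambda \in \C \setminus [a_1, \infty)$. On each branch $N_k$ the resolvent equation is $(-c_k \partial_x^2 + a_k - \lambda) v_k = f_k$, whose fundamental solutions are $e^{\pm i \xi_k(\lambda) x}$; square-integrability at $+\infty$ selects one exponential per branch, and the remaining coupling constants are fixed by $(T_0)$ and $(T_1)$. This produces a closed-form Green's kernel $G(\lambda; x, y)$ whose denominator is essentially $\sum_j c_j \xi_j(\lambda)$, which already matches the normalization appearing in $q_l$.

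Second I would apply Stone's formula to extract the spectral measure by taking the jump of $G(\lambda \pm i\varepsilon; x, y)$ across each interval $(a_j, a_{j+1})$. As $\lambda$ crosses $a_l$ from above, $\xi_l(\lambda)$ switches from positive real to positive imaginary, so only the first $j$ indices contribute in the band $(a_j, a_{j+1})$; this directly accounts both for the vanishing of $q_l$ below $a_l$ and for the multiplicity $j$ of the spectrum. An algebraic manipulation identifies the jump with $\sum_{l=1}^n q_l(\lambda)\, F_\lambda^{+,l}(x) \overline{F_\lambda^{-,l}(y)}\, d\lambda$, which is precisely the kernel of $V^\ast V$. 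Parseval on $\prod_k C_c^\infty(N_k)$ then yields isometry, and a standard density extension gives (i); surjectivity and diagonalization of $A$ in (ii) follow from the spectral theorem for the multiplication operator, and (iii) is immediate since multiplication by $\lambda$ on $L^2_q$ has spectrum $\bigcup_k [a_k, \infty) = [a_1, \infty)$. For (iv), the equivalence (a) $\Leftrightarrow$ (b) is the standard characterization of $D(A^l)$ under a spectral representation, and (b) $\Leftrightarrow$ (c) is immediate from the componentwise definition of $|\cdot|_q$.

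The main obstacle is the bookkeeping of the jump of $R(\lambda \pm i\varepsilon)$ across the real axis. One has to track the analytic branches of each $\xi_l(\lambda)$ separately on the two sides of $[a_1, \infty)$, verify that the off-diagonal contributions in $(x,y)$ cancel in the difference, and reassemble the surviving pieces into exactly the quadratic form in the $F_\lambda^{\pm,l}$ with the weights $q_l$, with the branch-switching at each threshold $a_l$ producing precisely the correct support conditions. Once this computation is done cleanly, all four statements follow without further surprises, and density of $\prod_k C_c^\infty(N_k)$ in $H$ is automatic from density of $C_c^\infty(0,\infty)$ in $L^2(0,\infty)$.
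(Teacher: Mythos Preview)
The paper does not prove this theorem at all: the sentence preceding the statement reads ``In \cite{ahr.arxiv2}, we show that $V$ diagonalizes $A$ \dots\ Let us recall these useful properties of $V$,'' and no argument is given here. So there is no proof in this paper to compare your proposal against; the result is imported wholesale from the companion article \cite{ahr.arxiv2}.

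That said, your outline is the standard and correct route to results of this type: explicit resolvent on each half-line, matched through the Kirchhoff conditions, followed by Stone's formula to read off the spectral density band by band. Your identification of the main difficulty---tracking the branch of each $\xi_l(\lambda)$ as $\lambda$ crosses the thresholds $a_l$, so that precisely $j$ families survive on $(a_j,a_{j+1})$ and the weights $q_l$ emerge with the right supports---is exactly where the work lies, and once it is done parts (i)--(iii) drop out as you say. One small point: the kernel you want from the boundary jump of the resolvent is $\sum_l q_l(\lambda)\, F_\lambda^{-,l}(x)\,\overline{F_\lambda^{-,l}(y)}$, matching the definition of $V$ in Definition~\ref{def.V}(v); your formula with $F_\lambda^{+,l}(x)\,\overline{F_\lambda^{-,l}(y)}$ is only the same thing when $\overline{F_\lambda^{-,l}} = F_\lambda^{+,l}$, which fails in the lower bands where some $\xi_m$ are purely imaginary and $s_l$ is not real. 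This does not affect the strategy, only the bookkeeping you already flagged as the obstacle.
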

%%%%%%%%%%%%%%%%%%%%%%%%%%%%%%%%%%%%%%%%%%%%%%%%%%%%%%%%%%%%%%%%%%%%%%%%%%%%
%
Denoting
$F_{\lambda}(x):=(F_{\lambda}^{-,1}(x), \ldots,
F_{\lambda}^{-,n}(x))^{T}$ and
$P_j = \left( \begin{array}{r|l}
I_j &  0 \\
\hline 0 &  0
\end{array}
\right)$,
where $I_j$ is the $j \times j$ identity matrix, for $\lambda \in
(a_j,a_{j+1})$ it holds:
$F_{\lambda}^T q(\lambda) F_{\lambda} = (P_j F_{\lambda})^T
q(\lambda) (P_j F_{\lambda})$ and
\begin{equation} \label{vector F}
P_j F_{\lambda} = \left( \begin{array}{c}
\left( +,*,*, \ldots, *, e^{-|\xi_{j+1}|x}, \ldots, e^{-|\xi_{n}|x} \right) \\
\left( *,+,*, \ldots, *, e^{-|\xi_{j+1}|x}, \ldots, e^{-|\xi_{n}|x} \right) \\
\left( *,*,+, \ldots, *, e^{-|\xi_{j+1}|x}, \ldots, e^{-|\xi_{n}|x} \right) \\
\vdots \\
\left( *,*, \ldots, *,+, e^{-|\xi_{j+1}|x}, \ldots, e^{-|\xi_{n}|x} \right) \\
0 \\
\vdots \\
0
\end{array} \right).
\end{equation}
Here \ $*$ \ means \ $e^{-i \xi_k(\lambda) }$ \  and \ $+$ \ means
\ $ \cos(\xi_k( \lambda) x ) - i s_k( \lambda) \sin(\xi_k(\lambda)
x) $ \
in the $k$-th column for $k=1,\ldots,j$.
 This can be interpreted as a multiple tunnel effect (tunnel
effect in the last $(n-j)$ branches with different exponential decay
rates). For $\lambda$ near $a_{j+1}$, the exponential decay of the
function $x \mapsto e^{-|\xi_{j+1}|x}$ is slow. The tunnel effect is
weaker on the other branches since the exponential decay is quicker.

%%%%%%%%%%%%%%%%%%%%%%%%%%%%%%%%%%%%%%%%%%%%%%%%%%%%%%%%%%%%%%%%%%%%%%%%%%%%%
We are now interested in the Abstract Cauchy Problem
\[ \hbox{(ACP)}: u_{tt}(t) + Au(t) = 0, \ t > 0, \textrm{ with } u(0)=u_0,\ u_t(0)=0.
\]
%%%%%%%%%%%%%%%%%%%%%%%%%%%%%%%%%%%%%%%%%%%%%%%%%%%%%%%%%%%%%%%%%%%%%%%%%%%%%
By the surjectivity of $V$ (cf. Theorem~\ref{V.iso}~(ii)) for every $j,k \in
\{1, \ldots, n\}$ with $k\leq j$ there exists an
initial condition $u_0 \in H$ satisfying

\medskip

\noindent
{\bf Condition} ${\mathbf(A_{j,k})}$:
  $(V u_0)_l \equiv 0, \ l \not= k$ and $(V u_0)_k \in C^2_c ((a_j, a_{j+1}))$.

\medskip

\begin{rem} \label{D A infty}
\begin{enumerate}
  \item We use the convention $a_{n+1}= + \infty$.
  \item For $u_0$ satisfying $(A_{j,k})$ there exist
  $a_j < \lambda_{\min} < \lambda_{\max}<a_{j+1}$
  such that
  \[\hbox{ supp} (V u_0)_k \subset [\lambda_{\min} , \lambda_{\max}]\]
  \item
  If $u_0 \in H$ satisfies $(A_{j,k})$, then
  $u_0 \in D(A^{\infty})= \displaystyle \bigcap_{l \geq 0} D(A^l)$,
  due to Theorem~\ref{V.iso}~(iv),
  since
  $\lambda \mapsto \lambda^{l} (Vu)_m(\lambda) \in
    L^2((a_m, + \infty),q_m), \ m=1,\ldots,n$
  for all $l \in \N$ by the compactness of
  $\hbox{ supp} (V u_0)_m$.
\end{enumerate}
\end{rem}
%
%%%%%%%%%%%%%%%%%%%%%%%%%%%%%%%%%%%%%%%%%%%%%%%%%%%%%%%%%%%%%%%%%%%%%%%%%%%%%
%
\begin{theo}[Solution formula of (ACP) in a special case]
\label{solform.lambda}
Fix $j,k \in \{1,\ldots,n\}$ with $k\leq j$. Suppose that  $u_0$
satisfies Condition $(A_{j,k})$. Then there exists a unique solution
$u$ of $(ACP)$ with
$u \in C^{l}([0;+ \infty), D(A^{m/2}))$
for all $l,m \in \N$. For $x \in N_r$ with $r \leq j$ such that
$r\neq k$
and $t \geq 0$, we have the representation
\begin{equation*}
u(t,x) = \frac{1}{2}(u_+(t,x) + u_-(t,x))
\end{equation*}
with
\begin{equation} \label{representation}
u_\pm(t,x):=\displaystyle\int_{\lambda_{\min}}^{\lambda_{\max}}
e^{\pm i \sqrt{\lambda}t} q_k(\lambda) e^{-i \xi_r(\lambda)x}
(Vu_0)_k(\lambda) d \lambda,
\end{equation}
\end{theo}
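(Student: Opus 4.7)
The plan is to combine the abstract functional calculus for the self-adjoint operator $A$ with the spectral representation $V$ from Theorem~\ref{V.iso}, and then simplify on $N_r$ using the explicit form of $F_\lambda^{-,k}$ from Definition~\ref{def.V}~(ii).

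First, since $A$ is self-adjoint and non-negative (as $\sigma(A) = [a_1,\infty) \subset [0,\infty)$), the unique solution of (ACP) with $u_t(0)=0$ is $u(t) := \cos(t\sqrt A)\,u_0$, defined by functional calculus; uniqueness follows from the standard energy identity. By Remark~\ref{D A infty}~(iii), Condition $(A_{j,k})$ forces $u_0 \in D(A^\infty)$. The $l$-th time derivative of $\cos(t\sqrt A)\,u_0$ is, up to a sign, either $A^{l/2}\cos(t\sqrt A)\,u_0$ or $A^{(l-1)/2}\sqrt A\,\sin(t\sqrt A)\,u_0$; in either case it belongs to $D(A^{m/2})$ for every $m$, and depends continuously on $t$ by strong continuity of the cosine and sine propagators. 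This yields $u \in C^l([0,+\infty), D(A^{m/2}))$ for all $l,m \in \N$.

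Second, because $V$ is a spectral representation (Theorem~\ref{V.iso}~(ii)), it intertwines $A$ with multiplication by $\lambda$, so that
\[
V\bigl(\cos(t\sqrt A)\,u_0\bigr)(\lambda) = \cos(t\sqrt\lambda)\,(Vu_0)(\lambda)
\]
componentwise. To recover $u(t,\cdot)$ I invert $V$: as an isometric bijection between Hilbert spaces, $V^{-1} = V^*$, and a direct duality computation against the $q$-weighted inner product gives, at least formally,
\[
(V^*G)(x) = \sum_{l=1}^n \int_{a_l}^{\infty} q_l(\lambda)\,F_\lambda^{-,l}(x)\,G_l(\lambda)\,d\lambda.
\]
Applying this with $G_l(\lambda) = \cos(t\sqrt\lambda)\,(Vu_0)_l(\lambda)$ and invoking $(A_{j,k})$ collapses the sum to the single term $l=k$, integrated over $[\lambda_{\min},\lambda_{\max}]$.

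Finally, for $x \in N_r$ with $r \neq k$, Definition~\ref{def.V}~(ii) gives $F_{\lambda,r}^{-,k}(x) = e^{-i\xi_r(\lambda)x}$; splitting $\cos(t\sqrt\lambda) = \tfrac12\bigl(e^{it\sqrt\lambda} + e^{-it\sqrt\lambda}\bigr)$ then produces \eqref{representation}. The main obstacle is the rigorous justification of the inversion formula for $V^*$: the defining integral for $Vf$ in Definition~\ref{def.V}~(v) presupposes $f \in L^1(N)$, so the duality identity is initially valid only on a dense subspace such as $\prod_{k=1}^n C_c^\infty(N_k)$, where Fubini applies directly. One then extends by continuity using the isometry property of Theorem~\ref{V.iso}~(i), observing that for the compactly supported smooth spectral datum $\cos(t\sqrt\lambda)(Vu_0)_k$ the right-hand side of the $V^*$ formula is an absolutely convergent $H$-valued integral. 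The hypothesis $r \leq j$ plays no role in this derivation itself, but it ensures that $\xi_r(\lambda)$ is real throughout $[\lambda_{\min},\lambda_{\max}]$, making the integrand genuinely oscillatory, as required for the stationary phase analysis in Section~\ref{decay}.
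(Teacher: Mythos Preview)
Your argument is correct and follows essentially the same route as the paper: represent the solution as $u(t)=V^{-1}\cos(t\sqrt{\lambda})Vu_0$ via functional calculus, then read off the formula on $N_r$ from the explicit inversion kernel and the form of $F_{\lambda,r}^{-,k}$ for $r\neq k$. You have simply made explicit what the paper delegates to the references \cite{fam2} and \cite{ahr.arxiv2}, including the regularity claim and the identification $V^{-1}=V^*$ via the $q$-weighted duality.
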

%%%%%%%%%%%%%%%%%%%%%%%%%%%%%%%%%%%%%%%%%%%%%%%%%%%%%%%%%%%%%%%%%%%%%%%%%%%%
%
\begin{proof}
Since $v_0 = u_t(0) = 0$, we have for the solution of (ACP) the
representation
\[
u(t)=V^{-1} \cos(\sqrt{\lambda}t)Vu_0.
\]
(cf.~for example  \cite[Theorem 5.1]{fam2}). The expression for
$V^{-1}$ given in \cite{ahr.arxiv2} yields the formula for $u_\pm$.
\end{proof}
%%%%%%%%%%%%%%%%%%%%%%%%%%%%%%%%%%%%%%%%%%%%%%%%%%%%%%%%%%%%%%%%%%%%%%%%%%%%%
%
%
\begin{rem}
Expression \eqref{representation} comes from a term of the type $*$
in $F_\lambda$ (see \eqref{vector F}) via the representation of
$V^{-1}$. A solution formula for arbitrary initial conditions which
is valid on all branches is available in \cite{ahr.arxiv2}. This
general expression is not needed in the following.
\end{rem}
%
%%%%%%%%%%%%%%%%%%%%%%%%%%%%%%%%%%%%%%%%%%%%%%%%%%%%%%%%%%%%%%%%%%%%%%%%%%%%%
%
%
%
%
%%%%%%%%%%%%%%%%%%%%%%%%%%%%%%%%%%%%%%%%%%%%%%%%%%%%%%%%%%%%%%%%%%%%%%%%%%%%%
%%%%%%%%%%%%%%%%%%%%%%%%%%%%%%%%%%%%%%%%%%%%%%%%%%%%%%%%%%%%%%%%%%%%%%%%%%%%%
\section{$L^{\infty}$-time decay}
\label{decay}
%%%%%%%%%%%%%%%%%%%%%%%%%%%%%%%%%%%%%%%%%%%%%%%%%%%%%%%%%%%%%%%%%%%%%%%%%%%%%
The time asymptotics of the $L^{\infty}$-norm of the solution of
hyperbolic problems is an important qualitative feature, for example
in view of the study of nonlinear perturbations.

In \cite{miha} the author derives the spectral theory for the
3D-wave equation with different propagation speeds in two adjacent
wedges. Further he attempts to give the $L^{\infty}$-time decay
which he reduces to a 1D-Klein-Gordon problem with potential step
(with a frequency parameter). He uses interesting tools, but his
argument is technically incomplete: the backsubstitution (see the
proof of Theorem~\ref{sol.decay} below) has not been carried out,
and thus his results cannot be reliable. Nevertheless, we have been
inspired by some of his techniques.

The main problem to determine the $L^{\infty}$-norm is the
oscillatory nature of the integrands in the solution formula
\eqref{representation}. The stationary phase formula as given by
L.~H\"{o}rmander in Theorem~7.7.5 of \cite{horm} provides a powerful
tool to treat this situation.

In the following Theorem we formulate a special case of this result
relevant for us.
%
%%%%%%%%%%%%%%%%%%%%%%%%%%%%%%%%%%%%%%%%%%%%%%%%%%%%%%%%%%%%%%%%%%%%%%%%%%%%
%
\begin{theo}[Stationary phase method] \label{horm.theo}
Let $K$ be a compact interval in $\R$, $X$ an open neighborhood of
$K$. Let $U \in C_0^2(K)$, $\Psi \in C^4(X)$ and $\Im \Psi \geq 0$
in $X$. If there exists $p_0 \in X$ such that
$ \frac{\partial}{\partial p}\Psi(p_0)=0,\
\frac{\partial^2}{\partial p^2}\Psi(p_0)\neq 0, $
and
$ \Im \Psi(p_0)=0, \ \frac{\partial}{\partial p}\Psi(p)\neq 0, \ p \in
K \setminus \{ p_0 \}, $
then
%%%%%%%%%%%%%%%%%%%%%%%%%%%%%%%%%%%%%%%%%%%%%%%%%%%%%%%%%%%%%%%%%%%%%%%%%%%%
%
$$ \Big\vert
%\left |
%
\int_K U(p) e^{i \omega \Psi(p)} dp \ - \ e^{i \omega \Psi(p_0)}
\left[ \frac{\omega}{2 \pi i} \frac{\partial^2}{\partial
p^2}\Psi(p_0) \right]^{-1/2} U(p_0)
\Big\vert
%\right |
%
\leq
C(K) \  \| U \|_{C^2(K)}  \ \omega^{-1} \ .
$$
for all $\omega > 0$. Moreover $C(K)$ is bounded when $\Psi$ stays
in a bounded set in $C^4(X)$.
%
%%%%%%%%%%%%%%%%%%%%%%%%%%%%%%%%%%%%%%%%%%%%%%%%%%%%%%%%%%%%%%%%%%%%%%%%%%%%
%
\end{theo}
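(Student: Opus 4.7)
The plan is to follow the classical two-region strategy: localize the integral by a smooth cutoff near and far from $p_0$, treat the far piece by repeated integration by parts, and reduce the near piece to a Gaussian integral via a complex Morse normal form. This is precisely the scheme of H\"ormander's Theorem~7.7.5 \cite{horm}, of which the present statement is a direct specialization, so the main task is to verify that all constants can be bounded uniformly in $\Psi$ varying in a bounded subset of $C^4(X)$.

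First, I would fix $\chi \in C_c^\infty(X)$ with $\chi \equiv 1$ on a small neighborhood $K_0$ of $p_0$ and supported in a slightly larger neighborhood on which a local normal form for $\Psi$ is available. Split $U = U\chi + U(1-\chi)$. On $\mathrm{supp}\,U(1-\chi) \subset K\setminus K_0$, the hypothesis $\Psi'(p)\neq 0$ together with compactness gives $|\Psi'(p)|\geq c > 0$. The transpose of $Lf := (i\omega\Psi')^{-1}f'$ sends $e^{i\omega\Psi}$ to itself, so two integrations by parts, combined with $|e^{i\omega\Psi}|\leq 1$ (from $\Im\Psi \geq 0$), give a remainder of order $\omega^{-2}\|U\|_{C^2(K)}$, which is stronger than what is claimed.

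The near piece is the heart of the argument. Shrinking $K_0$ if necessary, one constructs a $C^2$ change of variable $q = q(p)$ with $q(p_0)=0$, $q'(p_0)=1$, and
\[
\Psi(p) = \Psi(p_0) + \tfrac{1}{2}\Psi''(p_0)\, q(p)^2 \qquad (p \in K_0).
\]
Because $\Psi''(p_0)$ may be complex and because $\Im\Psi \geq 0$ with $\Im\Psi(p_0)=0$, this requires a \emph{complex} Morse lemma, obtained by applying the principal branch of the square root to $2(\Psi(p)-\Psi(p_0))/\Psi''(p_0)$ in a complex neighborhood of $K_0$ and invoking the implicit function theorem. After this change of variables, the localized contribution becomes
\[
e^{i\omega\Psi(p_0)}\int \tilde U(q)\, e^{i\omega\Psi''(p_0)q^2/2}\, dq,
\]
with $\tilde U \in C_c^2$ and $\tilde U(0)=U(p_0)$. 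Applying Parseval together with the explicit Fourier transform of $q \mapsto e^{iaq^2/2}$, valid for $\Im a \geq 0$, extracts the leading term $e^{i\omega\Psi(p_0)}[\omega\Psi''(p_0)/(2\pi i)]^{-1/2}U(p_0)$ with a remainder bounded by $C\omega^{-1}\|\tilde U\|_{C^2}$.

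The main obstacle is tracking the uniform constant $C(K)$: at each step -- the integration by parts, the construction of the Morse normal form, and the final Parseval estimate -- the constants depend only on finitely many derivatives of $\Psi$ on $X$ and on lower bounds for $|\Psi'|$ on $K\setminus K_0$ and for $|\Psi''(p_0)|$. All of these quantities remain uniformly controlled when $\Psi$ stays in a bounded set of $C^4(X)$, which yields the final uniformity assertion of the theorem.
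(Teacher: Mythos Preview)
The paper does not give its own proof of this theorem; it is simply stated as a special case of Theorem~7.7.5 in H\"ormander~\cite{horm}, with the reader referred there for the argument. Your sketch is a faithful outline of precisely that proof (smooth localization near $p_0$, repeated integration by parts on the non-stationary piece, complex Morse normal form and Gaussian evaluation on the stationary piece, then tracking constants), so you have supplied more than the paper does and by the same route it points to.

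One small caveat worth noting on the final uniformity clause: boundedness of $\Psi$ in $C^4(X)$ by itself does not furnish the lower bounds on $|\Psi'|$ away from $p_0$ and on $|\Psi''(p_0)|$ that your estimates rely on. H\"ormander's original statement carries an additional hypothesis---a uniform bound on $|p-p_0|/|\Psi'(p)|$---to guarantee exactly this. The paper's formulation suppresses that condition (it is harmless in the application, where the phases $\varphi(\cdot,\tau,\chi)$ satisfy it uniformly), and your sketch does the same; just be aware that the uniformity assertion as literally stated needs this extra ingredient.
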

%%%%%%%%%%%%%%%%%%%%%%%%%%%%%%%%%%%%%%%%%%%%%%%%%%%%%%%%%%%%%%%%%%%%%%%%%%%%
%
%
%%%%%%%%%%%%%%%%%%%%%%%%%%%%%%%%%%%%%%%%%%%%%%%%%%%%%%%%%%%%%%%
%
%%%%%%%%%%%%%%%%%%    MAIN THEOREM   %%%%%%%%%%%%%%%%%%%%%%%%%%
%
\begin{theo}[Time-decay of the solution of (ACP) in a special case]
\label{sol.decay}
Fix $j,k \in \{1,\ldots,n\}$ with $k\leq j$. Suppose that  $u_0$
satisfies Condition $(A_{j,k})$ and choose $\lambda_{\min} ,
\lambda_{\max} \in (a_j , a_{j+1})$ such that
\[
\hbox{\rm supp} (V u_0)_k \subset [\lambda_{\min} , \lambda_{\max}]
\subset (a_j , a_{j+1}) .\
\]
Then for all  $x \in N_r$ with $r \leq j$ and $r \neq k$ and all
$t \in \R^+$ such that $(t,x)$ lies in the cone described by
\begin{equation}  \label{cone-t-x}
\sqrt{\frac{\lambda_{\max}}{c_r (\lambda_{\max}-a_r)}} \leq
\frac{t}{x} \leq
\sqrt{\frac{\lambda_{\min}}{c_r(\lambda_{\min}-a_r)}} \ ,
\end{equation}

\noindent there exists   $H(t,x,u_0) \in \C$ and a constant $c(u_0)$
satisfying
\begin{equation} \label{time decay}
\left | u_+(t,x) -  H (t,x,u_0) t^{-1/2} \right| \leq c(u_0) \cdot
t^{-1} \ ,
\end{equation}
where $u_+$ is defined in Theorem~\ref{solform.lambda}, with
\[ |  H (t,x,u_0) | \leq
\Bigl(\frac{2 \pi c_k}{c_r}\Bigr)^{1/2}
\lambda_{\max}^{3/4} \cdot \dfrac{\max_{v \in [v_{\min},v_{\max}]}
\sqrt{|(a_r-a_k)v+a_r|}} {\left( \sum_{l\leq r} \sqrt{c_l}
\sqrt{(a_r-a_l)v_{\min} + a_r} \right)^2}
\cdot  \| (V u_0)_k \|_{\infty},
\]
   where $v_{\min}:= \dfrac{a_r}{\lambda_{\max} - a_r}$ and
$v_{\max}:= \dfrac{a_r}{\lambda_{\min} - a_r}$.
\end{theo}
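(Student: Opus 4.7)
The plan is to treat \eqref{representation} as an oscillatory integral in $\lambda$ with large parameter $\omega = t$, phase
\[\Psi(\lambda) := \sqrt{\lambda} - (x/t)\,\xi_r(\lambda),\]
and amplitude $U(\lambda) := q_k(\lambda)(Vu_0)_k(\lambda)$, and then to invoke Theorem~\ref{horm.theo} directly. Condition $(A_{j,k})$ gives $(Vu_0)_k \in C_c^2((a_j,a_{j+1}))$, hence $U \in C_c^2(K)$ with $K := [\lambda_{\min},\lambda_{\max}]$; the $C^2$-norm of $U$ depends only on $u_0$, which will supply the constant $c(u_0)$ in the remainder estimate. The phase $\Psi$ is real and $C^\infty$ on a relatively compact neighborhood of $K$ inside $(a_r,\infty)$, so $\Im\Psi\equiv 0$ is immediate.

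To verify the stationary-phase hypotheses I would analyze the critical point. Using $\xi_r'(\lambda) = 1/(2\sqrt{c_r(\lambda-a_r)})$, the equation $\Psi'(\lambda_0) = 0$ is equivalent to $t/x = \sqrt{\lambda_0/(c_r(\lambda_0-a_r))}$; the right-hand side is a strictly decreasing function of $\lambda_0 > a_r$, so the cone condition \eqref{cone-t-x} is exactly the condition that a unique $\lambda_0 \in K$ exist. Eliminating $x/t$ via this equation yields
\[\Psi''(\lambda_0) = \frac{a_r}{4\,\lambda_0^{3/2}(\lambda_0-a_r)} > 0,\]
which also shows that $\Psi'$ is strictly monotone on $K$ and hence nonvanishing off $\lambda_0$. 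As $t/x$ varies in a bounded interval, $\Psi$ stays in a bounded subset of $C^4$, so the H\"ormander constant in Theorem~\ref{horm.theo} is uniform. Applying the theorem produces \eqref{time decay} with
\[H(t,x,u_0) := e^{it\Psi(\lambda_0)}\sqrt{\frac{2\pi i}{\Psi''(\lambda_0)}}\,U(\lambda_0).\]

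To derive the pointwise bound on $|H|$, I would insert the expression for $\Psi''(\lambda_0)$ to obtain $\sqrt{2\pi/|\Psi''(\lambda_0)|} = \sqrt{8\pi\,\lambda_0^{3/2}(\lambda_0-a_r)/a_r}$, and use $|q_k(\lambda_0)| = \sqrt{c_k(\lambda_0-a_k)}/|\sum_l c_l\xi_l(\lambda_0)|^2$. The denominator is bounded from below by discarding the complex ($l > j$) contributions and then further restricting to $l \leq r$:
\[\Bigl|\sum_{l=1}^n c_l\xi_l(\lambda_0)\Bigr|^2 \geq \Bigl(\sum_{l\leq r}\sqrt{c_l(\lambda_0-a_l)}\Bigr)^2.\]
The reparameterization $v := a_r/(\lambda_0 - a_r)$ maps $\lambda_0 \in [\lambda_{\min},\lambda_{\max}]$ bijectively to $v \in [v_{\min},v_{\max}]$ and provides the key identity $\lambda_0 - a_l = ((a_r - a_l)v + a_r)/v$ for every $l$. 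After collecting the $v$-powers, the bounds $\lambda_0 \leq \lambda_{\max}$ and the monotonicity of $v \mapsto (a_r-a_l)v+a_r$ for $l \leq r$ (increasing in $v$, hence minimized at $v_{\min}$) yield the announced bound.

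The main obstacle is this last bookkeeping step: the factors of $\sqrt{v}$ arising from $\sqrt{2\pi/|\Psi''(\lambda_0)|}$, from $|q_k(\lambda_0)|$, and from the $\lambda_0 \leftrightarrow v$ substitution must be combined carefully so that the precise dependence on $c_r$, the max/min placements of $v$, and the numerical constant all come out as stated. This is tedious but purely algebraic; no further analytic input is required once the stationary-phase application is in place.
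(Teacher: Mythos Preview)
Your proposal is correct and rests on the same core idea as the paper: apply the stationary phase formula (Theorem~\ref{horm.theo}) to the oscillatory integral \eqref{representation} and then estimate the resulting leading coefficient via the substitution $v = a_r/(\lambda_0 - a_r)$ together with the lower bound on $\bigl|\sum_l c_l\xi_l\bigr|^2$ obtained by discarding the terms with $l > r$. The final bookkeeping you outline matches the paper's ``Fifth step'' exactly.

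Where you differ is in two technical choices made before invoking Theorem~\ref{horm.theo}. The paper first substitutes $p := \xi_r(\lambda)$, which turns the phase into the familiar Klein--Gordon dispersion $\sqrt{a_r + c_r p^2}$, and then takes $\omega = \sqrt{t^2 + x^2}$ (not $t$) as the large parameter, writing the phase as $\varphi(p,\tau,\chi) = \tau\sqrt{a_r + c_r p^2} - \chi p$ with $(\tau,\chi) \in [0,1]^2$; the fact that $(\tau,\chi)$ lives in a fixed compact set makes the verification that $\varphi$ stays in a bounded $C^4$-set immediate, and the passage from $\omega^{-1}$ to $t^{-1}$ uses $\omega \geq t$. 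Your route works directly in $\lambda$ with large parameter $t$ and phase $\sqrt{\lambda} - (x/t)\xi_r(\lambda)$; the cone condition \eqref{cone-t-x} confines $x/t$ to a compact interval, which gives you the same uniform $C^4$-bound with one fewer change of variables. Either implementation yields the same $H(t,x,u_0)$ and the same estimate; your version is a bit more economical, while the paper's makes the connection to the standard Klein--Gordon phase more visible. One small wording issue: the positivity of $\Psi''(\lambda_0)$ at the single point $\lambda_0$ does not by itself imply that $\Psi'$ is monotone on all of $K$; the uniqueness of the stationary point follows instead from your earlier observation that $\lambda \mapsto \sqrt{\lambda/(c_r(\lambda - a_r))}$ is strictly decreasing.
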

%%%%%%%%%%%%%%%%%%%%%%%%%%%%%%%%%%%%%%%%%%%%%%%%%%%%%%%%%%%%%%%
\begin{rem}
\begin{enumerate}
\item
Note that \eqref{cone-t-x} is equivalent to
\begin{equation}  \label{cone-v}
v_{\min} \leq v(t,x):= c_r (t/x)^2 -1 \leq v_{\max} \ .
\end{equation}
\item
The hypotheses of Theorem~\ref{sol.decay} imply that $j \geq 2$.
\item
An explicit expression for $ H (t,x,u_0)$ is given at the end of the
proof in \eqref{H}.
\item
We have chosen to investigate only $u_+$ in this proceedings
article, since the expression for $u_-$ does not posses a stationary
point in its phase. Hence, one can prove that its contribution will
decay at least as $c t^{-1}$. A detailed analysis will follow in a
forthcoming paper.
\end{enumerate}
\end{rem}

%%%%%%%%%%%%%%%%%%%%%%%%%%%%%%%%%%%%%%%%%%%%%%%%%%%%%%%%%%%%%%%%%%%%%%%%%%%%
%
%%%%%%%%%%%%%%%%%%%%%%%%%%%%    PROOF OF THE MAIN THEOREM   %%%%%%%%%%%%%%%%%%%%%%%%%%%
%
\begin{proof}
We devide the proof in five steps.
%
%%%%%%%%%%%%%%%%%%%%%%%%%%%%    SUBSTITUTION    %%%%%%%%%%%%%%%%%%%%%%%%%%%%%%%
%
\paragraph{\it First step: Substitution}
Realizing the substitution $p:= \xi_r(\lambda)=\sqrt{\frac{\lambda -
a_r}{c_r}}$ in the expression for $u_+$ given in
Theorem~\ref{solform.lambda} leads to:
$$u_{+}(t,x) = 2 c_r \int_{p_{\min}}^{p_{\max}}
e^{ i \sqrt{a_r + c_r p^2}t} q_k(a_r + c_r p^2) e^{-i px}
(Vu_0)_k(a_r + c_r p^2) p \; dp$$
with $p_{\min}:= \xi_r(\lambda_{\min})$ and $p_{\max}:=
\xi_r(\lambda_{\max})$.
%
%%%%%%%%%%%%%%%%%%%%%%%%%%%%   CHANGE OF PARAMETERS %%%%%%%%%%%%%%%%%%%%%%%
%

\bigskip

\paragraph{\it Second step: Change of the parameters $(t,x)$}
In order to get bounded parameters, we change $(t,x)$ into $(\tau,\chi)$ defined by
$$\tau=\dfrac{t}{\omega} \qquad \textrm{and} \qquad \chi=\dfrac{x}{\omega}  \qquad \textrm{with} \qquad \omega= \sqrt{t^2 + x^2},$$
following an argument from \cite{miha}. Thus the argument of the
exponential in the integral defining $u_{+}$ becomes:
$$ i \omega  (\sqrt{a_r + c_r p^2} \tau - p \chi) =: i \omega
  \varphi (p, \tau, \chi).$$
Note that $\tau, \chi \in [0,1]$ for $t,x \in [0,\infty)$.

\bigskip

%
%%%%%%%%%%%%%%%%%%    APPLICATION OF THE STATIONARY PHASE METHOD   %%%%%%%%%%%%%%%%%%%%%%%%%%
%
\paragraph{\it Third step: Application of the stationary phase method}
Now we want to apply Theorem~\ref{horm.theo} to $u_+$ with the
amplitude $U$ and the phase $\Psi$ defined by:
\[ U(p):= q_k(a_r +
c_r p^2) (Vu_0)_k(a_r + c_r p^2) p, \
\Psi(p):= \varphi (p, \tau, \chi), \ p \in [p_{\min},p_{\max}].
\]
The functions $U$ and $\Psi$ satisfy the regularity conditions on
the compact interval $K:=[p_{\min},p_{\max}]$ and $\Psi$ is a
real-valued function.
One easily verifies, that for $\tau \neq 0$
$$\Psi'(p)= \frac{c_r p}{\sqrt{a_r+c_r p^2}} \tau - \chi =0
\  \iff \
p=p_0
:= \sqrt{\frac{a_r \chi^2}{c_r(c_r \tau^2 - \chi^2)}}
= \sqrt{\frac{a_r x^2}{c_r(c_r t^2 - x^2)}} \ ,
$$
and that this stationary point $p_0$ belongs to the interval of
integration $[p_{\min},p_{\max}]$, if and only if $(t,x)$ lies in the
cone defined by \eqref{cone-t-x}. Furthermore for $p \in \R$
$$\frac{\partial^2 \Psi}{\partial p^2}(p) = \frac{\partial^2
\varphi}{\partial p^2}(p,\tau, \chi)= \tau \frac{c_r a_r}{(a_r + c_r
p^2)^{3/2}} \not= 0 \   .$$
Thus, Theorem~\ref{horm.theo} implies that for all $(t,x)$ satisfying
\eqref{cone-t-x} there exists a constant
$C(K,\tau,\chi)>0$ such that
$$
 \Bigl|
u_+(t,x) -
e^{-i \omega  \varphi (p_0,\tau, \chi)}
\underbrace{
\left( \frac{\omega}{2 i \pi} \frac{\partial^2 \varphi }{\partial
p^2}(p_0,\tau, \chi) \right)^{-1/2}  U(p_0)
}_{(*)}
 \Bigr|
\leq C(K,\tau,\chi) \| U \|_{C^2(K)} \ \omega^{-1} \
$$
for all $\omega > 0 $.

\bigskip

%
%%%%%%%%%%%%%%%%%%%%%%%%    BACKSUBSTITUTION    %%%%%%%%%%%%%%%%%%%%%%%%%%%
%

\paragraph{\it Fourth step: Backsubstitution}
We must now control the dependence of $C(K,\tau,\chi)$ on the
parameters $\tau,\chi$. To this end, one has to assure that $\Psi =
\varphi(\cdot,\tau, \chi)$ stays in a bounded set in $C^4(X)$, if
$\tau$ and $\chi$ vary in $[0,1]$, where we choose $X=(p_m,p_M)$
such that $0<p_m<p_{\min}<p_{\max}<p_M<\infty$.
This follows using the above expressions for $\frac{\partial
\varphi}{\partial p}, \frac{\partial^2 \varphi}{\partial p^2}$ and
$$
\frac{\partial^3 \varphi}{\partial p^3}(p,\tau, \chi)=
-\frac{3a_r c_r^2 p}{(a_r+c_r p^2)^{5/2}} \ \tau \ ,
 \quad
\frac{\partial^4 \varphi}{\partial p^4}(p,\tau, \chi)=
-\frac{3a_r c_r^2 (a_r -4p^2 c_r)}{(a_r+c_r p^2)^{7/2}} \ \tau
$$
for $p \in X$. Thus Theorem~\ref{horm.theo} implies that there
exists a constant $C(K)>0$ such that $C(K,\tau,\chi) \leq C(K)$ for
all $\tau,\chi \in [0,1]$.
\\
To evaluate $(*)$ we observe that
$p_0 = \frac{1}{\sqrt{c_r}}\sqrt{\frac{a_r}{ c_r (t/x)^2 - 1}}$.
This implies
$$\xi_l(a_r+c_r p_0^2) = \sqrt{\frac{(a_r + c_r p_0^2) - a_l}{c_l}}
= \frac{1}{\sqrt{c_l}} \frac{\sqrt{(a_r - a_l) \left( c_r (t/x)^2 -
1 \right) + a_r}}{\sqrt{c_r (t/x)^2 - 1}}
$$
and thus
\begin{align*}
q_k(a_r+c_r p_0^2)
&= \frac{c_k \xi_k(a_r+c_r p_0^2)}{|\sum_{l=1}^n c_l \xi_l(a_r+c_r
p_0^2)|^2} \\
&= \sqrt{c_k}\sqrt{c_r (t/x)^2 - 1} \frac{\sqrt{ (a_r - a_k) \left(
c_r (t/x)^2 - 1 \right) + a_r}} {\left| \sum_{l=1}^n \sqrt{c_l}
\sqrt{ (a_r - a_l) \left( c_r (t/x)^2 - 1 \right) + a_r} \right|^2}\
.
\end{align*}
Finally,
$$
\frac{\partial^2 \varphi }{\partial p^2}(p_0,\tau, \chi)
=\tau \frac{c_r a_r}{(a_r + c_r p_0^2)^{3/2}}
=\tau \frac{\left( c_r \tau^2 - \chi^2 \right)^{3/2}}{(a_r
c_r)^{1/2} \tau^2}
= \tau (c_r a_r)^{-1/2}
\left( \frac{c_r (t/x)^2 -1 }{( t/x)^2} \right)^{3/2} .
$$

Combining these results and using $\omega \tau = t$ we find
\protect\begin{align*}
 (*) &=   \left( \frac{\omega}{2 i \pi} \ \frac{\partial^2 \varphi
}{\partial p^2}(p_0,\tau, \chi)\right)^{-1/2} \ q_k(a_r + c_r p_0^2)
\ p_0 \
(Vu_0)_k(a_r+ c_r p_0^2) \displaybreak[0]\\
 &=(2 i \pi)^{1/2} t^{-1/2} (c_r a_r)^{1/4} \left( \frac{\left( t/x
\right)^2}{c_r \left( t/x \right)^2 -1 } \right)^{3/4} \times
\\
& \quad \times \sqrt{c_k}\sqrt{c_r (t/x)^2 - 1} \frac{\sqrt{ (a_r -
a_k) \left( c_r (t/x)^2 - 1 \right) + a_r}} {\left| \sum_l
\sqrt{c_l} \sqrt{ (a_r - a_l) \left( c_r (t/x)^2 - 1 \right) + a_r}
\right|^2} \
\times
\\
& \quad  \times
\frac{1}{\sqrt{c_r}}\sqrt{\frac{a_r}{ c_r (t/x)^2 - 1}} \
(Vu_0)_k(a_r+ c_r p_0^2)
\\
  &= (2 i \pi)^{1/2}a_r^{3/4}c_r^{1/4} \  h_1(t,x) \ h_2(t,x)\
(Vu_0)_k(a_r+ c_r p_0^2) \ t^{-1/2}
\protect\end{align*}
with
\[ h_1(t,x):= \left( \frac{\left( t/x \right)^2}{c_r \left( t/x
\right)^2 -1 } \right)^{3/4}, \quad
h_2(t,x):= \frac{\sqrt{ (a_r - a_k) \left( c_r (t/x)^2 - 1 \right) +
a_r}} {\left| \sum_l \sqrt{c_l} \sqrt{ (a_r - a_l) \left( c_r
(t/x)^2 - 1 \right) + a_r} \right|^2}
\]

\bigskip

%
%%%%%%%%%%%%%%%%%%   UNIFORM ESTIMATES  %%%%%%%%%%%%%%%
%
\paragraph{\it Fifth step: Uniform estimates}
It remains to estimate $h_1(t,x)  h_2(t,x) (Vu_0)_k(a_r+ c_r p_0^2)$
uniformly in $t$ and $x$, if $(t,x)$ satisfies \eqref{cone-t-x}.
To this end we note that the function $b \mapsto \dfrac{b}{c_r b -
1}$ is a decreasing function on $(1/c_r , +\infty)$. Thus the
maximum of $h_1$ for $(t,x)$ satisfying \eqref{cone-t-x}  is
attained at $\frac{t}{x} = \sqrt{\frac{\lambda_{\max}}{c_r
(\lambda_{\max}-a_r)}}$\ .  This implies
\begin{equation} \label{h1}
h_1(t,x) \leq \Bigl(\frac{1}{c_r a_r}\lambda_{\max}\Bigr)^{3/4}
\hbox{ for } (t,x) \hbox{ satisfying } \eqref{cone-t-x}.
\end{equation}
\noindent Let us now estimate $h_2$. For a fixed $v$ in
$[v_{\min},v_{\max}]$, we denote by $I_1$ the set of indices $l$
such that $(a_r - a_l)v + a_r \geq 0$ and
$I_2=\{ 1, \ldots , n \} \setminus I_1$. \\
Then, for any $v$ in $[v_{\min},v_{\max}]$ we have
$\{ 1, \ldots , r  \} \subset I_1$ (since $v_{\min}>0$) and
\begin{align*}
 \Big\vert \sum_l \sqrt{c_l}  &\sqrt{ (a_r - a_l) v + a_r} \Big\vert^2 \\
& = \Big( \sum_{l \in I_1} \sqrt{c_l} \sqrt{ (a_r - a_l) v + a_r}
\Big)^2 +\Big\vert \sum_{l \in I_2} \sqrt{c_l} \sqrt{ (a_r - a_l)
v + a_r} \Big\vert^2 \\
& \geq \Big( \sum_{l \leq r} \sqrt{c_l} \sqrt{ (a_r - a_l) v + a_r} \Big)^2 \\
& \geq \Big( \sum_{l \leq r} \sqrt{c_l} \sqrt{ (a_r - a_l) v_{\min} +
a_r} \Big)^2 \ .
\end{align*}
Thus, \eqref{cone-v} implies
\begin{equation} \label{h2}
\vert h_2(t,x)\vert \leq \frac{\max_{v \in [v_{\min},v_{\max}]}
\sqrt{|(a_r-a_k)v+a_r|}} {\left( \sum_{l\leq r} \sqrt{c_l}
\sqrt{(a_r-a_l)v_{\min} + a_r} \right)^2} \ .
\end{equation}
Putting everything together, the assertion of the theorem is valid for
\begin{equation} \label{H}
 H (t,x,u_0):=
e^{-i  \varphi (p_0,t,x)}
(2i\pi)^{1/2}a_r^{3/4}c_r^{1/4} c_k^{1/2} \  h_1(t,x) \ h_2(t,x)\
(Vu_0)_k(a_r+ c_r p_0^2) \ .
\end{equation}
Finally the right hand side of estimate \eqref{time decay} is
derived from the inequality
\begin{equation}
C(K,\tau,\chi) \| U \|_{C^2(K)} \ \omega^{-1}
\leq
C(K) \bigl\| p \mapsto
 q_k(a_r + c_r p^2)
(Vu_0)_k(a_r + c_r p^2)p
\bigr\|_{C^2([p_{\min},p_{\max}])} \ t^{-1}
\label{C}
\end{equation}
The $C^2$-norm is finite, since the involved functions are regular on
the compact set $[p_{\min},p_{\max}].$
\end{proof}

%
%%%%%%%%%%%%%%%%%%%%%%%%%%%%%%%%%%%%%%%%%%%%%%%%%%%%%%%%%%%%%%%%%%%%%%%%%%%%%
%                           GROWING POTENTIAL STEP
%%%%%%%%%%%%%%%%%%%%%%%%%%%%%%%%%%%%%%%%%%%%%%%%%%%%%%%%%%%%%%%%%%%%%%%%%%%%%
\section{Growing potential step}
\label{acc.decay}
\noindent For this section we specialize to the case of two branches
$N_1$ and $N_2$ and, for the sake of simplicity, we also set $c_1 =
c_2 = 1$. We show that, choosing a generic initial condition $u_0$
in a compact energy band included in $(a_2, \infty)$, the
coefficient $H(t,x,u_0)$ in the asymptotic expansion of
Theorem~\ref{sol.decay} tends to zero, if the potential step $a_2 -
a_1$ tends to infinity. Simultaneously the cone of the exact
$t^{-1/2}$-decay shrinks and inclines toward the $t$-axis.

\begin{theo} \label{sol.acc.decay}
 Let $0 < \alpha < \beta < 1$ and $\psi \in C^2_c((\alpha, \beta))$ with $\|
 \psi \|_\infty = 1$ be given. Setting $\tilde \psi (\lambda) := \psi(\lambda -
 a_2)$, we choose the initial condition $u_0 \in H$ satisfying $(V u_0)_2 \equiv
 0$ and $(V u_0)_1 = \tilde \psi$. Furthermore, let $u_+$ be defined as in
 Theorem~\ref{solform.lambda}.

 Then there is a constant $C(\psi, \alpha, \beta)$ independent of $a_1$ and
 $a_2$, such that for all $t \in \R^+$ and all $x \in N_2$ with
 \[ \sqrt{\frac{a_2 + \beta}{\beta}} \le \frac tx \le
    \sqrt{\frac{a_2 + \alpha}{\alpha}}
 \]
 the value $H(t,x,u_0)$ given in \eqref{H} satisfies
 \[ \bigl| u_+(t,x) - H(t,x,u_0) \cdot t^{-1/2} \bigr| \le C(\psi, \alpha,
    \beta) \cdot t^{-1}
 \]
 and
 \[ \bigl| H(t,x,u_0) \bigr| \le \sqrt{2\pi}
    \frac{\sqrt{\beta} (a_2 + \beta)^{3/4}}
    {\sqrt{a_2} \sqrt{a_2 - a_1 + \beta}}.
 \]
\end{theo}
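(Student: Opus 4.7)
The strategy is to specialize Theorem~\ref{sol.decay} to the present setting and then carry out a careful bookkeeping of the constants to track their dependence on $a_1$ and $a_2$. First I would verify that $u_0$ satisfies condition $(A_{j,k})$ with $j=2$, $k=1$: indeed $(Vu_0)_1 = \tilde\psi$ has support in $[a_2+\alpha,a_2+\beta]\subset (a_2,+\infty)=(a_2,a_3)$, so the hypothesis is fulfilled with $\lambda_{\min}=a_2+\alpha$, $\lambda_{\max}=a_2+\beta$. For $x\in N_2$ we have $r=2\neq k=1$, and inserting $c_r=1$, $a_r=a_2$ in the cone condition \eqref{cone-t-x} yields exactly the cone in the statement. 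Thus Theorem~\ref{sol.decay} applies and produces the expansion $u_+(t,x)=H(t,x,u_0)t^{-1/2}+O(t^{-1})$ with $H$ given by \eqref{H}.

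The bound on the remainder term requires showing that the constant $c(u_0)$ from \eqref{C} can be chosen independently of $a_1, a_2$. After substitution, the interval of integration becomes $K=[\sqrt\alpha,\sqrt\beta]$, which is independent of the potentials, so the neighborhood $X$ and the constant $C(K)$ appearing in \eqref{C} are fixed once $\alpha,\beta$ are fixed. It remains to estimate
\[
U(p)=q_1(a_2+p^2)\,\tilde\psi(a_2+p^2)\,p=p\,\psi(p^2)\cdot\frac{\sqrt{A+p^2}}{\bigl(\sqrt{A+p^2}+p\bigr)^2},
\]
with $A:=a_2-a_1\ge 0$, in $C^2(K)$. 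A direct differentiation shows that the rational-in-$\sqrt{A+p^2}$ factor and its first two $p$-derivatives are bounded uniformly for $A\in[0,+\infty)$ and $p\in K$ (they are smooth on $[0,+\infty)\times K$, and behave like $A^{-1/2}, A^{-1}, A^{-3/2}$ as $A\to\infty$, since $p\ge\sqrt\alpha>0$ keeps the denominator away from zero). Combined with $\|p\mapsto p\,\psi(p^2)\|_{C^2(K)}$, which depends only on $\psi$ and $\alpha,\beta$, this yields $\|U\|_{C^2(K)}\le C(\psi,\alpha,\beta)$, hence the required uniform remainder bound.

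For the pointwise bound on $H$, I would work directly from \eqref{H} with $c_1=c_2=1$, $r=2$, $k=1$, $a_r=a_2$. Setting $s:=(t/x)^2-1$ one has $p_0^2=a_2/s$, so
\[
a_2^{3/4}\,h_1(t,x)=\bigl(a_2(s+1)/s\bigr)^{3/4}=(a_2+p_0^2)^{3/4},
\]
and in the cone $s\in[a_2/\beta,a_2/\alpha]$, giving $a_2+p_0^2=a_2+a_2/s\le a_2+\beta$, hence $a_2^{3/4}h_1\le(a_2+\beta)^{3/4}$. Writing $w:=(a_2-a_1)s+a_2$, the factor $h_2$ becomes $\sqrt{w}/(\sqrt{w}+\sqrt{a_2})^2\le 1/\sqrt{w}$, and in the cone $s\ge a_2/\beta$ forces
\[
w\ge (a_2-a_1)\frac{a_2}{\beta}+a_2=\frac{a_2(a_2-a_1+\beta)}{\beta},
\]
so $h_2(t,x)\le\sqrt{\beta}/\bigl(\sqrt{a_2}\sqrt{a_2-a_1+\beta}\bigr)$. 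Together with $|\tilde\psi(a_2+p_0^2)|=|\psi(p_0^2)|\le\|\psi\|_\infty=1$ and the overall factor $\sqrt{2\pi}$ from $|(2i\pi)^{1/2}|$, this multiplies out to the claimed bound on $|H(t,x,u_0)|$.

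The conceptually routine but technically most delicate step is the uniform $C^2$-control of the weight $p\mapsto q_1(a_2+p^2)$ as $A=a_2-a_1\to\infty$; the other calculations are elementary algebraic manipulations on the cone. Everything else is a careful substitution in the formulas from Section~\ref{decay}.
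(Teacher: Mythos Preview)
Your proposal is correct and follows essentially the same route as the paper's own proof: specialize Theorem~\ref{sol.decay} with $j=2$, $k=1$, $r=2$, $c_1=c_2=1$, observe that $K=[\sqrt\alpha,\sqrt\beta]$ is independent of $a_1,a_2$, bound $\|U\|_{C^2(K)}$ uniformly in $A=a_2-a_1$, and estimate $h_1,h_2$ directly on the cone. The only cosmetic difference is that the paper computes $U'$ and $U''$ explicitly via the identity $f'(p)f(p)=p$ for $f(p)=\sqrt{A+p^2}$ and bounds each term by a power of $1/f(p)\le 1/\sqrt\alpha$, whereas you argue more qualitatively that the weight and its derivatives are smooth on $[0,\infty)\times K$ with the stated decay in $A$; both arguments yield the same uniform bound.
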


\begin{proof}
 Note that it is always possible to choose the initial condition in the
 indicated way, thanks to the surjectivity of $V$,
 cf.\@ Theorem~\ref{V.iso}~ii).

 The constant $C(\psi, \alpha,
    \beta)$ has been
 already  calculated in Theorem~\ref{sol.decay}. It remains to make
 sure that it is independent of $a_1$ and $a_2$ and to prove the estimate for
 $|H(t,x,u_0)|$.

 We start with the latter and carry out a refined analysis of the proof of
 Theorem~\ref{sol.decay} for our special situation. Using the notation of
 this proof, \eqref{H} yields
 \[ \bigl| H(t,x,u_0) \bigr| = \sqrt{2\pi} a_2^{3/4} h_1(t,x) \bigl| h_2(t,x)
    \bigr| \cdot \| (Vu_0)_1 \|_\infty.
 \]
 By \eqref{h1}
 and $\lambda_{\mathrm{max}} = a_2 + \beta$ we find
 \[ h_1(t,x) \le \frac{(a_2 + \beta)^{3/4}}{a_2^{3/4}}
 \]
 and, investing the definition of $h_2$ together with \eqref{cone-v}, we have
 \begin{align*}
  \bigl| h_2(t,x) \bigr| &= \biggl|
    \frac{\sqrt{(a_2 - a_1) ((t/x)^2 - 1) + a_2}}
    {\bigl( \sqrt{(a_2 - a_1) ((t/x)^2 - 1) + a_2} + \sqrt{a_2} \bigr)^2}
    \biggr|
    \le \frac{1}{\sqrt{(a_2 - a_1) ((t/x)^2 - 1) + a_2}} \\
  &\le \frac{1}{\sqrt{(a_2 - a_1) v_{\mathrm{min}} + a_2}}.
 \end{align*}
 Putting in the definitions of $v_{\mathrm{min}}$ and afterwords
 $\lambda_{\mathrm{max}}$ and rearranging terms, this leads to
 \[ \bigl| h_2(t,x) \bigr| \le \frac{\sqrt{\beta}}
    {\sqrt{a_2} \sqrt{a_2 - a_1 + \beta}}.
 \]
 Since $\| (Vu_0)_1\|_\infty = \|\tilde \psi\|_\infty = \|\psi\|_\infty$ was set
 to $1$, we arrive at the estimate
 \[ \bigl| H(t,x,u_0) \bigr| \le \sqrt{2\pi} (a_2 + \beta)^{3/4}
    \frac{\sqrt{\beta}}{\sqrt{a_2} \sqrt{a_2 - a_1 + \beta}}.
 \]
 Going again back to Theorem~\ref{sol.decay} for the constant $C$ we have by
 \eqref{C}
 \[ C = C(K) \bigl\| U(p) \|_{C^2(K)},
 \]
 where
 \[ U(p) = p q_1(a_2 + p^2) (Vu_0)_1(a_2 + p^2), \quad p \in K,
 \]
 and
 \[ K = [p_{\mathrm{min}}, p_{\mathrm{max}}] = [\xi_2(a_2 + \alpha),
    \xi_2(a_2 + \beta)] = [\sqrt{\alpha}, \sqrt{\beta}].
 \]
 Thus, the constant $C(K)$ is independent of $a_1$ and $a_2$ and we can
 start to estimate the $C^2$-norm of $U$:
 \begin{align*}
  U(p) &= p \tilde \psi(a_2 + p^2)
    \frac{\xi_1(a_2 + p^2)}{|\xi_1(a_2 + p^2) + \xi_2(a_2 + p^2)|^2} =
    p \psi(p^2) \frac{\sqrt{a_2 - a_1 + p^2}}
    {\bigl( \sqrt{a_2 - a_1 + p^2} + p \bigr)^2} \\
  &= p \psi(p^2)
    \frac{f(p)}{(f(p) + p)^2}
 \end{align*}
 where $f(p) := \sqrt{a_2 - a_1 + p^2}$. For the function $U$ itself
 we find
 \[ | U(p) | \le \sqrt{\beta} \| \psi \|_\infty \frac{1}{f(p)} =
    \frac{\sqrt{\beta}}{\sqrt{a_2 - a_1 + p^2}} \le
    \frac{\sqrt{\beta}}{\sqrt{a_2 - a_1 + \alpha}} \le
    \frac{\sqrt{\beta}}{\sqrt{\alpha}}.
 \]
 Calculating the derivatives is lengthy, but using $f'(p) f(p) =
 p$, one finds constants $C_1$ and $C_2$ depending only on $\psi$, $\alpha$ and
 $\beta$ with
 \begin{align*}
  |U'(p)| &= \Bigl| \bigl( p \psi(p^2) \bigr)' \frac{ f(p)}{(f(p) + p)^2} +
    p \psi(p^2) \frac{ p^2 - pf(p) -2 f(p)^2}{f(p)(f(p) + p)^3} \Bigr| \\
  &\le C_1 \Bigl( \frac{1}{f(p)} +
    \frac{2p^2 + 4 p f(p) + 2 f(p)^2}{f(p) (f(p) + p)^3} \Bigr) \le
    C_1 \Bigl( \frac{1}{f(p)} + \frac{2}{f(p)^2} \Bigr)^2 \le C_1
    \Bigl( \frac{1}{\sqrt{\alpha}} + \frac{2}{\alpha} \Bigr)
 \end{align*}
 and in a similar manner
 \begin{align*}
  |U''(p)| &= \Bigl| \bigl( p \psi(p^2) \bigr)'' \frac{f(p)}{(f(p) + p)^2} +
    2 \bigl( p \psi(p^2) \bigr)'
    \frac{p^2 - pf(p) - 2 f(p)^2}{f(p) (f(p) + p)^3} \\
  & \hspace{3cm} \strut + p \psi(p^2)
    \frac{5 f(p)^4 + 8 p f(p)^3 - 4 p^3 f(p) - p^4}{f(p)^3 (f(p) + p)^4}
    \Bigr| \\
  &\le C_2 \Bigl( \frac{1}{\sqrt{\alpha}} + \frac{4}{\alpha} +
    \frac{5}{\alpha^{3/2}} \Bigr).
 \qedhere
 \end{align*}
\end{proof}
%
%%%%%%%%%%%%%%%%%%%%%%%%%%%%%%%%%%%%%%%%%%%%%%%%%%%%%%%%%%%%%%%%%
%                      FINAL REMARK
%%%%%%%%%%%%%%%%%%%%%%%%%%%%%%%%%%%%%%%%%%%%%%%%%%%%%%%%%%%%%%%%%
%
\begin{rem}
\begin{enumerate}
\item
 In the situation of Theorem~\ref{sol.acc.decay}, we have
 \[ \bigl| H(t,x,u_0) \bigr| \le \sqrt{2\pi} (a_2 + \beta)^{3/4}
    \frac{\sqrt{\beta}}{\sqrt{a_2} \sqrt{a_2 - a_1 + \beta}} \sim
    \sqrt{2 \pi \beta} \ a_2^{-1/4} \quad \text{as } a_2 \to +\infty.
 \]
\item
Suppose that $\psi(\mu) \ge m <0$ for $\mu \in [\alpha',\beta']$
with $\alpha < \alpha' < \beta' < \beta$. Then one can show that
\[
\bigl| H(t,x,u_0) \bigr| \ge  \sqrt{2 \pi \alpha} \ a_2^{-1/4} m.
 \]
for $(t,x)$ satisfying
 \[ \sqrt{\frac{a_2 + \beta'}{\beta'}} \le \frac tx \le
    \sqrt{\frac{a_2 + \alpha'}{\alpha'}}
 \]
if $a_2$ is sufficiently large. Thus the coefficient of $t^{-1/2}$
behaves exactly as const$\, \cdot \, a_2^{-1/4}$ (in particular it
tends to zero) as $a_2 \to +\infty$.
\item
The cone in the $(t,x)$-plane, where $u_+$ decays as const$\, \cdot
\, t^{-1/2}$ is given by
 \[ \sqrt{\frac{\beta}{a_2 + \beta}} \le \frac xt
 \le
    \sqrt{\frac{\alpha}{a_2 + \alpha}}.
 \]
Clearly it shrinks and inclines toward the $t$-axis as $a_2 \to
+\infty$. One can prove that outside this cone, $u_+$ decays at
least as $t^{-1}$.
This exact asymptotic behavior of the $L^{\infty}$-norm might be
experimentally verified.
\item
Note that \eqref{time decay} also implies that
\begin{align*}
\bigl|u_+(t,x)\bigr|
&\le \bigl|u_+(t,x)- H(t,x,u_0)t^{-1/2}+H(t,x,u_0)t^{-1/2} \bigr| \\
&\le C(\psi,\alpha,\beta) t^{-1} + \bigl|H(t,x,u_0) \bigr|t^{-1/2} \\
& \le D(\psi,\beta,a_1,a_2) t^{-1/2}, \ x \in \R, \ t \ge 1.
\end{align*}
\end{enumerate}
\end{rem}
%
%%%%%%%%%%%%%%%%%%%%%%%%%%%%%%%%%%%%%%%%%%%%%%%%%%%%%%%%%%%%%%%%
%                        BIBLIOGRAPHIE                         %
%%%%%%%%%%%%%%%%%%%%%%%%%%%%%%%%%%%%%%%%%%%%%%%%%%%%%%%%%%%%%%%%

\end{document}